\newcommand{\R}{{I\!\!R}}
\newcommand{\C}{{\mathbb{C}}}
\def\R{{\rm I}\! {\rm R}}
\def\X{{\bf X}}
\newtheorem{algorithm}{Algorithm}[section]
\newtheorem{assumption}{Assumption}[section]
\begin{document}

\pagestyle{headings}

\title{Multiple Iterative Splitting method for Higher order and Integro-differental equations}
\author{J\"urgen Geiser and Thomas Zacher}
\institute{Ernst-Moritz-Arntz University of Greifswald, \\
Department of Physics, \\
Domstr. 14, D-17487  Greifswald, Germany \\
\email{juergen.geiser@uni-greifswald.de}}
\maketitle

\begin{abstract}

In this paper we present an extension of standard iterative
splitting schemes to multiple splitting schemes for solving
higher order differential equations.

We are motivated by dynamical systems,
which occur in dynamics of the electrons in the
plasma using a simplified Boltzmann equation.
Oscillation problems in spectroscopy problems using
wave-equations.

The motivation arose to simulate active plasma resonance spectroscopy
which is used for plasma diagnostic techniques, see \cite{braith2009}, \cite{lapke2010}
and \cite{oberrath2011}.

\end{abstract}

{\bf Keywords}:  kinetic model, neutron transport, dynamics of electrons, transport equation, splitting schemes, semi-group.\\

{\bf AMS subject classifications.} 35K25, 35K20, 74S10, 70G65.

\section{Introduction}

We motivate our studying on simulating a active
plasma resonance spectroscopy which is well established in
plasma diagnostic techniques.

To study the model with simulation models, we concentrate on
an abstract kinetic model, which described the
dynamics of electrons in the plasma by using a 
Boltzmann equation. The Boltzmann equation is coupled
with the electric field and we obtain coupled 
partial differential equations.

The paper is outlined as follows.

In section \ref{modell} we present our mathematical model 
and a possible reduced model for the further approximations.

The functional analytical setting with the higher 
order differential equations are discussed in section \ref{higher}.

The splitting schemes are presented in in Section \ref{splitt}.

Numerical experiments are done in Section \ref{num}.
In the contents, that are given in Section \ref{concl}, 
we summarize our results.

\section{Mathematical Model}
\label{modell}

In the following a model is presented due to the 
motivation in  \cite{braith2009}, \cite{lapke2010}
and \cite{oberrath2011}.

The models consider a fluid dynamical approach
of the natural ability of plasmas to resonate
in the near of the electron plasma frequency $\omega_{pe}$.

Here we specialize to an abstract kinetic model to
describe the dynamics of the electrons in the plasma, that
allows to do the resonation.

The Boltzmann equation for the electron particles  are given as
\begin{eqnarray}
\label{neutron}
&& \frac{\partial f(x,v,t)}{\partial t} = - v \cdot \nabla_x f(x,v,t) - \frac{e}{m_e} \nabla_x \phi \cdot \nabla_v f(x,v,t) \nonumber \\
&&  - \sigma(x,v,t)  f(x,v,t) + \int_V \kappa(x,v,v') f(x, v', t) \; dv' , \\
&& f(x,v,0) = f_0(x,v) , 
\end{eqnarray}
and boundary conditions are postulated at the boundaries of $P$ (plasma).

In front of the materials we assume complete reflection of the
electrons due to the sheath $f(v_{||} + v_{\perp})$ with $v_{||}$
is the parallel and $v_{\perp}$ perpendicular to the surface normal
vector. $\phi$ is the electric field.

\section{Higher order differential equations}
\label{higher}

We consider the abstract homogeneous Cauchy problem in a Banach space $\X \in \R ^n$:
\begin{eqnarray}
\label{diff_1}
&& A_0 \frac{d^n {\bf u}(t)}{d^n t} + A_1 \frac{d^{n-1} {\bf u}(t)}{dt^{n-1}} {\bf u}(t) + \ldots +  A_{n} =  {\bf f}(t) , \\
&& \frac{d^{i-1} {\bf u}(t)}{dt^{n-1}} = {\bf u}_{i-1}, i = 1 , \ldots, n ,
\end{eqnarray}
where $A_0 , \ldots, A_n \in \X \times \X$ are bounded operators and $|| \cdot ||$ is the
corresponding norm in $\X$ and let $|| \cdot ||_{L(\X)}$ be the induced operator norm.

For the transformation we have the following assumptions:
\begin{assumption}
\label{assum1}

1.) The function $f(t)$ is given as:
\begin{eqnarray}
\label{acp_1}
&&  {\bf f}(t) = 0, 
\end{eqnarray}
otherwise we solve a non-autonomous equation.

2.) We assume that the characteristic polynomial:
\begin{eqnarray}
\label{assum_2}
&&  A_0 \lambda^n + A_1 \lambda^{n-1} + \ldots + A_{n} = 0 ,
\end{eqnarray}
has solution of complex valued matrices in $\X \times \X \in \C^m \times \C^m$,
given as:
\begin{eqnarray}
\label{assum_2}
 (\lambda I - B_1 ) (\lambda I - B_2) + \ldots + ( \lambda I - B_{n}) = 0 ,
\end{eqnarray}

\end{assumption}

\begin{corollary}

The higher order differential equation (\ref{diff_1}) can be decoupled with the
assumptions \ref{assum1} to the following differential equation:
\begin{eqnarray}
\label{diff_2}
&& \frac{d {\bf u}_1(t)}{d t} - B_1 {\bf u}_1 = 0 , \\
&& \frac{d {\bf u}_2(t)}{d t} - B_2 {\bf u}_2 = 0 , \\
&& \ldots \\
&& \frac{d {\bf u}_n(t)}{d t} - B_n {\bf u}_n= 0 ,
\end{eqnarray}
where the analytical solution is given as:
\begin{eqnarray}
\label{ana_1}
&& {\bf u}(t) = \sum_{i=1}^n \; d_i {\bf u}_{i, 0} = \sum_{i=1}^n  \exp(B_i t ) \; d_i {\bf u}_{i, 0}.
\end{eqnarray}
and $d_i$ are given via the initial conditions.

\end{corollary}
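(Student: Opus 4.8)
The plan is to proceed by operator factorization, mirroring the classical reduction of a scalar constant-coefficient ODE to a product of first-order operators. First I would left-multiply (\ref{diff_1}) by $A_0^{-1}$ (assuming $A_0$ invertible, which the factorization in Assumption \ref{assum1} implicitly requires) so that the equation reads $L\,{\bf u} = 0$ with the $n$-th order operator
\begin{eqnarray}
L = \frac{d^n}{dt^n} + \tilde A_1 \frac{d^{n-1}}{dt^{n-1}} + \ldots + \tilde A_n .
\end{eqnarray}
The characteristic polynomial of $L$ is exactly the matrix polynomial of Assumption \ref{assum1}, and by that assumption it admits a factorization into linear matrix factors, which upon substituting the operator $\frac{d}{dt}$ for the spectral parameter $\lambda$ yields
\begin{eqnarray}
L = \Bigl( \frac{d}{dt} - B_1 \Bigr)\Bigl( \frac{d}{dt} - B_2 \Bigr) \cdots \Bigl( \frac{d}{dt} - B_n \Bigr) .
\end{eqnarray}

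Next I would verify that each first-order problem $\frac{d{\bf u}_i}{dt} = B_i {\bf u}_i$ produces a solution of $L\,{\bf u}=0$. If ${\bf u}_i$ solves the $i$-th first-order equation then $\bigl(\frac{d}{dt}-B_i\bigr){\bf u}_i = 0$; provided the factors of $L$ commute, the factor $\bigl(\frac{d}{dt}-B_i\bigr)$ can be commuted to act last, so that $L\,{\bf u}_i = 0$. Standard semigroup theory for bounded operators then gives the unique solution of each first-order Cauchy problem as ${\bf u}_i(t) = \exp(B_i t)\,{\bf u}_{i,0}$, which is the building block appearing in (\ref{ana_1}).

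Then, by linearity of $L$, any combination $\sum_{i=1}^n d_i \exp(B_i t)\,{\bf u}_{i,0}$ solves $L\,{\bf u}=0$, giving the asserted analytical solution (\ref{ana_1}). To pin down the coefficients $d_i$ I would impose the $n$ initial conditions $\frac{d^{i-1}{\bf u}}{dt^{i-1}}(0) = {\bf u}_{i-1}$; differentiating the sum and evaluating at $t=0$ produces a block-Vandermonde, Wronskian-type linear system in the $d_i$, whose invertibility (hence unique solvability for the $d_i$) follows when the $B_i$ are distinct, completing the matching with the initial data.

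The main obstacle is the step hidden inside the factorization: for matrix coefficients the passage from a product of linear factors to a decoupled family of independent first-order equations is only legitimate when the $B_i$ commute, since otherwise $\bigl(\frac{d}{dt}-B_i\bigr)$ cannot be moved past the remaining factors and the sum-of-exponentials ansatz breaks down. I therefore expect the crux to be justifying that Assumption \ref{assum1} indeed delivers pairwise commuting roots $B_i$ (equivalently, simultaneously diagonalizable blocks satisfying the non-commutative Vieta relations linking the $B_i$ to the $A_j$); under that reading the decoupling and the representation (\ref{ana_1}) follow as above.
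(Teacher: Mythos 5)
Your proposal follows essentially the same route as the paper's proof, which consists of a single sentence invoking exactly the two ideas you develop: the characteristic-polynomial reduction borrowed from scalar linear ODEs, and superposition of the resulting first-order solutions. Everything you add beyond that — the explicit operator factorization $L = \bigl(\tfrac{d}{dt}-B_1\bigr)\cdots\bigl(\tfrac{d}{dt}-B_n\bigr)$, the matrix-exponential solution of each first-order Cauchy problem, and the block-Vandermonde matching of initial data — is detail the paper leaves implicit (the Vandermonde system appears only in the remark following the corollary, not in the proof itself).

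The one point where you go genuinely beyond the paper is the commutativity caveat, and it is a real issue, not a technicality: with non-commuting matrix roots $B_i$, only solutions of the rightmost factor $\bigl(\tfrac{d}{dt}-B_n\bigr){\bf u}=0$ automatically satisfy $L{\bf u}=0$, so the sum-of-exponentials ansatz (\ref{ana_1}) fails in general; moreover the non-commutative Vieta relations needed to recover the coefficients $A_j$ from the factors $B_i$ also only expand cleanly under commutativity. Your proof is therefore correct precisely under the reading you propose — pairwise commuting $B_i$ — and that hypothesis ought to be stated explicitly as part of Assumption \ref{assum1}. The paper's one-line proof assumes it silently, so your write-up is not weaker than the paper's argument; it is the same argument made honest about where it can break.
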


\begin{proof}

The solutions can be derived via the characteristics polynomial (idea of scalar linear differential equations) and the idea of the superposition of the 
linear combined solutions.
\end{proof}

\begin{remark}
The initial conditions are computed by solving the 
Vandermode matrix, see  the ideas in \cite{luther2004}. \\
We have to solve: 
\begin{eqnarray}
\label{diff_21}
\left(
\begin{array}{c c c c}
I & I & \ldots & I \\
B_1 & B_2 & \ldots & B_n \\
B_1^2 & B_2^2 & \ldots & B_n^2 \\
\vdots & \vdots & \ddots & \vdots \\
B_1^{n-1} & B_2^{n-1} & \ldots & B_n^{n-1} 
\end{array}
\right) \cdot
\left(
\begin{array}{c}
d_1 u_{1,0}  \\
d_2 u_{2,0}  \\
d_3 u_{3,0}  \\
\vdots  \\
d_n u_{n,0}
\end{array}
\right) =
\left(
\begin{array}{c}
u(0)  \\
\frac{\partial}{\partial t} u(0) \\
\frac{\partial^2}{\partial t^2} u(0)  \\
\vdots  \\
\frac{\partial^{n-1}}{\partial t^{n-1}} u(0)
\end{array}
\right) 
\end{eqnarray}

\end{remark}

A further simplification can be done to rewrite the integral-differential 
equation in two first order differential equations.
Later such a reduction allows us to apply fast iterative splitting 
methods.

\begin{corollary}

The higher order differential equation (\ref{diff_1}) can be transformed with the
assumptions \ref{assum1} to two first order differential equation:
\begin{eqnarray}
\label{diff_2_1}
&& \frac{d {\bf u}_1 (t)}{d t} = B_1 {\bf u}_1(t) \\
&& {\bf u}_1(0) = d_1 {\bf u}_{1,0}, \\
&&\ldots \\
\label{diff_2_2}
&& \frac{d {\bf u}_n (t)}{d t} = B_n {\bf u}_n(t) \\
&& {\bf u}_n(0) = d_n {\bf u}_{n,0},
\end{eqnarray}
where we have $B_i = B_{i1} + B_{i2}$ for $i = 1 \ldots, n$.

The analytical solution are given as:
\begin{eqnarray}
\label{ana_1}
&& {\bf u}(t) = \sum_{i=1}^n  \exp(B_i t ) d_i {\bf u}_{i,0} .\nonumber
\end{eqnarray}

\end{corollary}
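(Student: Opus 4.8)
The plan is to build on the previous corollary, which has already decoupled the $n$-th order equation (\ref{diff_1}) into the $n$ first-order systems $\frac{d {\bf u}_i}{dt} = B_i {\bf u}_i$ with analytical solution ${\bf u}(t) = \sum_{i=1}^n \exp(B_i t) d_i {\bf u}_{i,0}$. The present statement claims essentially the same decoupled form, but with the additional structural observation $B_i = B_{i1} + B_{i2}$, which is precisely the additive splitting needed so that the iterative splitting methods of Section \ref{splitt} can later be applied to each first-order equation. So the proof reduces to two things: first, reproducing the reduction to first-order form; second, justifying the splitting $B_i = B_{i1} + B_{i2}$ of each generator.

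First I would invoke Assumption \ref{assum1}, in particular the factorization (\ref{assum_2}) of the characteristic polynomial into the factors $(\lambda I - B_i)$, exactly as in the preceding corollary. This gives the diagonalized/decoupled system (\ref{diff_2_1})--(\ref{diff_2_2}) directly, with the initial data ${\bf u}_i(0) = d_i {\bf u}_{i,0}$ determined by the Vandermonde relation (\ref{diff_21}) from the remark. Since each $B_i$ is a bounded operator on $\X$, the scalar-linear ODE theory lifts verbatim to the operator setting: $\exp(B_i t)$ is well defined as a norm-convergent series in $L(\X)$, and ${\bf u}_i(t) = \exp(B_i t) d_i {\bf u}_{i,0}$ solves each equation. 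Superposition then yields the stated ${\bf u}(t) = \sum_{i=1}^n \exp(B_i t) d_i {\bf u}_{i,0}$, identical to the formula (\ref{ana_1}) already established.

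Next I would address the splitting $B_i = B_{i1} + B_{i2}$. The natural choice, motivated by the original model (\ref{neutron}), is to separate each generator into a transport/convection part and a collision/reaction part (the scattering integral and absorption term $-\sigma f + \int_V \kappa f\, dv'$), so that $B_{i1}$ and $B_{i2}$ are themselves bounded operators whose sum recovers $B_i$. Because no particular algebraic property (such as commutativity) is demanded of the pair at this stage, the decomposition is merely an existence claim: any writing of $B_i$ as a sum of two bounded operators will do, and the semigroup $\exp(B_i t)$ is unaffected by which splitting is selected.

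The main obstacle, and the point I would flag carefully, is the consistency of Assumption \ref{assum1} itself. The factorization (\ref{assum_2}) into linear factors $(\lambda I - B_i)$ is only legitimate when the matrix coefficients $A_0,\ldots,A_n$ commute appropriately (so that the polynomial in the noncommuting operator $\lambda$ actually factors), and when $A_0$ is invertible so the leading term can be normalized; in the genuinely noncommutative operator case a clean factorization into first-degree factors need not exist. I would therefore treat the existence of the $B_i$ as given by the hypothesis rather than derived, restrict attention to the finite-dimensional setting $\X \in \R^n$ where the $B_i$ are matrices and $\exp(B_i t)$ is unambiguous, and present the splitting $B_i = B_{i1} + B_{i2}$ as the definition that sets up the iterative scheme, with the consistency of superposition verified by substitution back into (\ref{diff_1}).
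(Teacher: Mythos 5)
Your proposal follows essentially the same route as the paper: decouple the equation via the factorization in Assumption \ref{assum1}, solve each first-order system by the exponential $\exp(B_i t)$, superpose to get ${\bf u}(t) = \sum_{i=1}^n \exp(B_i t)\, d_i {\bf u}_{i,0}$, and treat $B_i = B_{i1} + B_{i2}$ as a purely definitional splitting that sets up the later iterative schemes. The paper's own proof is in fact terser --- it only cites the characteristic polynomials $\lambda_i I - (B_{i1}+B_{i2}) = 0$ and the superposition idea --- so your added care about bounded operators, the convergent exponential series, and the noncommutative factorization issue in Assumption \ref{assum1} only strengthens the same argument.
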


\begin{proof}

The analytical solution of the first order differential equation (\ref{diff_2_1}) and (\ref{diff_2_2})
are given by each characteristic polynomial:
\begin{eqnarray}
&& \lambda_1 I - (B_{11} + B_{12}) = 0 , \\
&& \ldots \\
&& \lambda_n I -  (B_{n1} + B_{n2}) = 0 ,
\end{eqnarray}
while the solution is given as with the notations:
\begin{eqnarray}
&& \lambda_{i} I =  B_{i1} + B_{i2} , \; i = 1, \ldots, n ,
\end{eqnarray}
and therefore the analytical solution is given as (\ref{ana_1}).

Therefore this is the solution of our integro-differential equation (\ref{diff_1}) with the assumptions \ref{assum1}.

\end{proof}

\section{Splitting schemes}
\label{splitt}

The operator-splitting methods are used to solve complex models in
the geophysical and environmental physics, they are developed and
applied in \cite{stra68}, \cite{verwer98} and \cite{zla95}. This
ideas based in this article are solving simpler equations with
respect to receive higher order discretization methods for the
remain equations. For this aim we use the operator-splitting
method and decouple the equation as follows described.

In the following we concentrate on the iterative-splitting method.

\subsection{Iterative splitting method for Integro-differential equations}

The following algorithm is based on the iteration with fixed
splitting discretization step-size $\tau$, namely, on the time
interval $[t^n,t^{n+1}]$ we solve the following sub-problems
consecutively  for $i=0,2, \dots 2m$. (Cf. \cite{kan03} and
\cite{glow04}.)

\begin{eqnarray}
 && \frac{\partial c_{i j}(t)}{\partial t} = B_{i 1}
c_{i j}(t) \; + \; B_{i 2} c_{i j-1}(t) , \;
\mbox{with} \; \; c_{i j}(t^n) = d_i c^{n} \label{gleich_kap33a} \\
&& \mbox{and} \; c_{i 0}(t^n) = c^n \; , \; c_{i, -1} = 0.0 , \nonumber
\\\label{gleich_kap33b}
&& \frac{\partial c_{i, j+1}(t)}{\partial t} = B_{i 1} c_{i j}(t) \; + \; B_{i 2} c_{i, j+1}(t) , \; \\
&& \mbox{with} \; \; c_{i, j+1}(t^n) = d_i c^{n}\; , \nonumber
\end{eqnarray}
 where $c^n$ is the known split
approximation at the  time level  $t=t^{n}$. The split
approximation at the time-level $t=t^{n+1}$ is defined as
$c^{n+1}= \sum_{k= 1}^n c_{k, 2m+1}(t^{n+1})$.

\smallskip
\begin{theorem}
Let us consider the abstract Cauchy problem in a Banach space $\X$
\begin{equation}
\begin{array}{c}
{\displaystyle \partial_t c(t) = B_{i1} c(t) + B_{i2} c(t), \quad 0 < t
\leq T } \\
\noalign{\vskip 1ex} {\displaystyle c(0)=d_i c_{i, 0} }
\end{array} \label{eq:ACP}
i=1, \ldots, n ,
\end{equation}

\noindent where  $B_{i1}, B_{i2},B_{i1} + B_{i2}: \!  {\X} \rightarrow {\X} $ are given
linear  operators being generators of the $C_0$-semi-group and $c_{i,0}
\in \X$ is a given element. Then the iteration process
(\ref{gleich_kap33a})--(\ref{gleich_kap33b}) is convergent  and
the and the rate of the convergence is of second order.
\end{theorem}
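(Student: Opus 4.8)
The plan is to estimate the error between the iterates and the exact solution by deriving a recursion for the defect, then showing that each iteration picks up a factor of order $\tau$. First I would fix the index $i$ and work on a single time interval $[t^n, t^{n+1}]$ of length $\tau$, writing $A := B_{i1} + B_{i2}$ for the full generator and introducing the error functions $e_{ij}(t) := c(t) - c_{ij}(t)$, where $c(t)$ solves the abstract Cauchy problem (\ref{eq:ACP}). Subtracting the exact equation from the two iteration steps (\ref{gleich_kap33a}) and (\ref{gleich_kap33b}), I obtain a coupled system of inhomogeneous evolution equations for $e_{ij}$ and $e_{i,j+1}$ in which the inhomogeneity of one step is driven by the error of the previous step. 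Because the initial data of the iterates are chosen to match the split value $d_i c^n$, the error functions satisfy homogeneous initial conditions $e_{ij}(t^n) = 0$ at the start of the interval, which is the feature that forces the gain of a power of $\tau$.

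Next I would pass to the equivalent integral (variation-of-constants) formulation. Since $B_{i1}$ generates a $C_0$-semigroup, the first error equation reads
\begin{equation}
e_{ij}(t) = \int_{t^n}^{t} \exp\bigl(B_{i1}(t-s)\bigr) \, B_{i2} \, e_{i,j-1}(s) \, ds ,
\end{equation}
and similarly the second step yields $e_{i,j+1}$ in terms of $e_{ij}$ via the semigroup generated by $B_{i2}$. Taking norms and using the bound $\|\exp(B_{i1}s)\|_{L(\X)} \le K$ for $s \in [0,\tau]$, which holds because the operators are bounded generators, I get the contraction-type estimate $\|e_{ij}\|_{\infty} \le C\,\tau\,\|e_{i,j-1}\|_{\infty}$ on the interval. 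Iterating this recursion twice (once for each of the two coupled steps per iteration sweep) shows that after $m$ iterations the error is bounded by $(C\tau)^{2m}$ times the initial defect, and reading off the power associated with a single additional iteration gives the claimed order-two local consistency in $\tau$.

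Finally I would assemble the global statement. The summation $c^{n+1} = \sum_{k=1}^n c_{k,2m+1}(t^{n+1})$ over the decoupled components just adds finitely many error contributions, so the per-component estimate survives the sum up to a constant factor $n$. I would then combine the local (one-step) error bound with a standard Lady Windermere's-fan / stability argument over the $T/\tau$ time steps to conclude global convergence with the stated second-order rate, the semigroup stability constants providing the uniform bound needed to prevent error accumulation from blowing up.

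I expect the main obstacle to be making the recursion clean and the constants uniform. The delicate points are: justifying that the initial conditions of the iterates really produce vanishing initial error (so that the integral starts at $t^n$ and each sweep gains a genuine factor of $\tau$ rather than merely a factor bounded by a constant); and controlling the constant $C$, which depends on $\|B_{i2}\|_{L(\X)}$ and on the semigroup bound $K$, uniformly in the time step so that the telescoping over all subintervals does not degrade the order. Handling the coupling between the two half-steps of the iteration correctly — so that the two applications of the variation-of-constants formula compound to give the second-order factor rather than only first order — is where the argument must be carried out with care.
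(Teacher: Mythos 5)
Your proposal cannot be compared line by line with the paper's own proof, because the paper does not actually prove this theorem: its ``proof'' is a single sentence deferring to the reference \cite{gei_2011}. What you have written is, in substance, the standard argument from that reference and from the Farago--Geiser preprint \cite{fargei05}: subtract the exact equation of (\ref{eq:ACP}) from each half-sweep of (\ref{gleich_kap33a})--(\ref{gleich_kap33b}), exploit the vanishing initial error on $[t^n,t^{n+1}]$, represent the error by variation of constants, and bound each application by $C\tau$ so that one full sweep (two half-steps) contracts the error by $O(\tau^2)$ --- which is precisely the ``second order'' meaning of the theorem. So your route is the intended one, reconstructed rather than outsourced, and the skeleton (defect recursion, per-sweep gain of $\tau^2$, summation over the $n$ components, Lady Windermere's fan over the $T/\tau$ subintervals) is sound.

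There is, however, one genuine gap relative to the hypotheses as stated. Your contraction step pulls $\|B_{i2}\|_{L(\X)}$ (respectively $\|B_{i1}\|_{L(\X)}$) out of the variation-of-constants integral, and you justify the semigroup bound by calling the operators ``bounded generators''. The theorem only assumes that $B_{i1}$, $B_{i2}$ and $B_{i1}+B_{i2}$ generate $C_0$-semigroups; such generators are in general unbounded, in which case $\|B_{i2}\,e_{i,j-1}(s)\| \leq \|B_{i2}\|_{L(\X)}\,\|e_{i,j-1}(s)\|$ is not available and the recursion collapses. To close this you must either (a) add the hypothesis that the operators are bounded --- which is what the cited references effectively do, and what the rest of this paper assumes anyway (cf.\ the boundedness assumption stated after (\ref{diff_1})) --- or (b) replace that estimate by bounds on $B_{i2}c(s)$ along the exact solution, which requires regularity assumptions ($c(s)\in D(B_{i2})$, uniform bounds on $\|B_{i2}c(s)\|$) that neither you nor the theorem statement supplies. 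As written, your argument proves the theorem for bounded $B_{i1},B_{i2}$ only; that restriction should be made explicit, since in the unbounded $C_0$-semigroup setting claimed by the theorem the key inequality in your second paragraph is not justified.
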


\begin{proof}

The proof is done in the work of Geiser \cite{gei_2011}.

\end{proof}

The algorithm is given as:

\begin{algorithm}

\begin{eqnarray}
\label{gleich_kap3_3a} && \frac{\partial c^i(t)}{\partial t} = A
c^i(t) \; + \; B c^{i-1}(t), \;
\mbox{with} \; \; c^i(t^n) = c^{i-1}({t^{n+1}}) \\
&& \mbox{and the starting values} \; c^{0}(t^n) = c(t^n) \; \mbox{results of
last iteration} \; , \;
c^{-1}(t^n) = 0.0 , \nonumber \\
&& \frac{\partial c^{i+1}(t)}{\partial t} = A c^i(t) \; + \; B c^{i+1}(t), \; \\
&& \mbox{with} \; \; c^{i+1}(t^n) = c^{i}({t^{n+1}})\; , \nonumber \\
&& \epsilon > | c^{i+1} (t^{n+1}) - c^{i-1}(t^{n+1})| \mbox{Stop criterion} \\
&& \mbox{result for the next time-step} \\
&&  c(t^{n+1}) = c^{m}(t^{n+1}) , \; \mbox{for} \;  m \; \mbox{fulfill the stop-criterion}
\end{eqnarray}
for each $i=0,2, \dots$, where $c^n$ is the known split
approximation at the previous time level.

\end{algorithm}

In the following we concentrate on the iterative-splitting method.

\subsection{Iterative splitting method for higher order differential equations}

The following algorithm is based on the iteration with fixed
splitting discretization step-size $\tau$, namely, on the time
interval $[t^n,t^{n+1}]$ we solve the following sub-problems
consecutively  for $j=0,2, \dots 2m$. (Cf. \cite{kan03} and
\cite{glow04}.)

\begin{eqnarray}
 && \frac{\partial c_{i j}(t)}{\partial t} = B_{i 1}
c_{i j}(t) \; + \; B_{i 2} c_{i j-1}(t) , \;
\mbox{with} \; \; c_{i j}(t^n) = d_i c^{n} \label{gleich_kap33a} \\
&& \mbox{and} \; c_{i 0}(t^n) = c^n \; , \; c_{i, -1} = 0.0 , \nonumber
\\\label{gleich_kap33b}
&& \frac{\partial c_{i, j+1}(t)}{\partial t} = B_{i 1} c_{i j}(t) \; + \; B_{i 2} c_{i, j+1}(t) , \; \\
&& \mbox{with} \; \; c_{i, j+1}(t^n) = d_i c^{n}\; , \nonumber
\end{eqnarray}
 where $i = 1, \ldots, I$ are the number of equations.
Further $c^n$ is the known split
approximation at the  time level  $t=t^{n}$. The split
approximation at the time-level $t=t^{n+1}$ is defined as
$c^{n+1}= \sum_{k= 1}^n c_{k, 2m+1}(t^{n+1})$.

\smallskip
\begin{theorem}
Let us consider the abstract Cauchy problem in a Banach space $\X \subset \C$:
\begin{equation}
\begin{array}{c}
{\displaystyle \partial_t c(t) = B_{i1} c(t) + B_{i2} c(t), \quad 0 < t
\leq T } \\
\noalign{\vskip 1ex} {\displaystyle c(0)=d_i c_0 }
\end{array} \label{eq:ACP}
i=1, \ldots, n ,
\end{equation}

\noindent where $d_i \in \C$ is the constant based on the initial conditions, 
further $B_{i1}, B_{i2},B_{i1} + B_{i2}: \!  {\X} \rightarrow {\X} $ are given
linear  operators being generators of the $C_0$-semi-group and $c_0
\in \X$ is a given element. Then the iteration process
(\ref{gleich_kap33a})--(\ref{gleich_kap33b}) is convergent  and
the and the rate of the convergence is of second order.
\end{theorem}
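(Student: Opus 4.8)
The plan is to establish the second-order convergence of the iterative splitting scheme by deriving and analyzing the recursion satisfied by the iteration error. First I would define, for each fixed equation index $i$ and on the interval $[t^n, t^{n+1}]$, the error functions $e_{ij}(t) = c(t) - c_{ij}(t)$, where $c(t)$ is the exact solution of \eqref{eq:ACP} and $c_{ij}(t)$ is the $j$-th iterate produced by \eqref{gleich_kap33a}--\eqref{gleich_kap33b}. Subtracting the exact equation $\partial_t c = (B_{i1} + B_{i2}) c$ from the two iteration equations yields a coupled system for $e_{ij}$ and $e_{i,j+1}$, namely
\begin{equation}
\partial_t e_{ij}(t) = B_{i1} e_{ij}(t) + B_{i2} e_{i,j-1}(t), \qquad
\partial_t e_{i,j+1}(t) = B_{i1} e_{ij}(t) + B_{i2} e_{i,j+1}(t),
\end{equation}
with homogeneous initial data $e_{ij}(t^n) = 0$ since every iterate matches the split approximation $d_i c^n$ at $t^n$. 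The key structural observation is that each error equation is an inhomogeneous abstract Cauchy problem whose forcing term involves the previous iterate's error.

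Next I would pass to the integral (mild) formulation using the $C_0$-semigroup $\exp(B_{i1} s)$ generated by $B_{i1}$. By the variation-of-constants (Duhamel) formula, the first error equation gives
\begin{equation}
e_{ij}(t) = \int_{t^n}^{t} \exp\!\bigl(B_{i1}(t-s)\bigr)\, B_{i2}\, e_{i,j-1}(s)\, ds.
\end{equation}
Here I would invoke the semigroup bound $\|\exp(B_{i1} s)\|_{L(\X)} \le C$ for $s \in [0,\tau]$, valid because $B_{i1}$ is a generator, and the boundedness of $B_{i2}$. Taking the supremum norm over the interval, this produces the estimate $\|e_{ij}\| \le \tau\, K\, \|e_{i,j-1}\|$ for a constant $K$ depending on $C$ and $\|B_{i2}\|_{L(\X)}$, so one iteration gains a factor of order $\tau$. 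Substituting the second error equation into this bound, and using that $e_{i,j+1}$ is itself driven by $e_{ij}$ through the same Duhamel representation, I would iterate the estimate once more to capture the full two-step recursion $c_{ij} \to c_{i,j+1}$, obtaining a gain of order $\tau^2$ per pair of iterates and hence second-order consistency in the local error.

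Finally I would assemble the global result. Summing over the $n$ component equations via the definition $c^{n+1} = \sum_{k=1}^n c_{k,2m+1}(t^{n+1})$ and noting that the weights $d_i$ are fixed constants, the local errors combine linearly, so the $\tau^2$ order is preserved for the full split approximation. A standard Lady-Windermere-fan argument then converts the local truncation error of order $\tau^{m+1}$ after $m$ iterations into a global convergence rate, confirming convergence and the claimed second-order rate once the stop criterion is met. Since the statement notes the detailed argument appears in Geiser \cite{gei_2011}, I would present this as the essential mechanism and defer the full constant-tracking to that reference. The main obstacle I anticipate is the commutator structure: the gain of a clean factor $\tau$ at each step relies on the homogeneous initial condition and on controlling $\exp(B_{i1} s) B_{i2}$, and making the order count rigorous requires care that the constants $C$ and $K$ are uniform in the iteration index $j$ and independent of $\tau$, which is exactly where the generator hypothesis on $B_{i1}$, $B_{i2}$, and $B_{i1}+B_{i2}$ does the heavy lifting.
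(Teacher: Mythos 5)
The paper's own ``proof'' of this theorem is a single sentence deferring to Geiser \cite{gei_2011}; it prints no argument at all. Your sketch reconstructs precisely the standard argument from that line of work (cf.\ also \cite{fargei05}): form the error recursion, apply the variation-of-constants formula with the semigroup $\exp(B_{i1}s)$, gain a factor $\tau$ per half-step and hence $\tau^2$ per full sweep, then sum over the $n$ component equations and pass from local to global error. So you are taking essentially the same route, only making explicit what the paper outsources to the reference. One point you should tighten: your key estimate $\|e_{ij}\|\le \tau K \|e_{i,j-1}\|$ requires $\|B_{i2}\|_{L(\X)}<\infty$, and the second half-step likewise requires $\|B_{i1}\|_{L(\X)}<\infty$, since there the roles swap --- the mild solution uses the semigroup generated by $B_{i2}$ with forcing term $B_{i1}e_{ij}$, not ``the same Duhamel representation'' as you write. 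The theorem as stated assumes only that the $B_{ik}$ generate $C_0$-semigroups, so they may be unbounded, and then the clean order gain per iteration does not follow without additional regularity of the iterates; the argument closes as you give it only under the boundedness that the paper's matrix setting of Section \ref{higher} actually provides, and your write-up should state that hypothesis explicitly rather than attributing the estimate to the generator property.
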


\begin{proof}

The proof is done in the work of Geiser \cite{gei_2011}.

\end{proof}

The algorithm is given as:

\begin{algorithm}

\begin{eqnarray}
\label{gleich_kap3_3a} && \frac{\partial c_{i,j}(t)}{\partial t} = B_{1,i}
c_{i,j}(t) \; + \; B_{2,i} c_{i-1,j}(t), \;
\mbox{with} \; \; c_i(t^n) = c^{i-1}({t^{n+1}}) \\
&& \mbox{and the starting values} \; c_{i,0}(t^n) = c_i(t^n) \; \mbox{results of
last iteration} \; , \;
c_{i,-1}(t^n) = 0.0 , \nonumber \\
&& \frac{\partial c_{i,j+1}(t)}{\partial t} = B_{1,i} c_{i,j}(t) \; + \; B_{2,i} c_{i,j+1}(t), \; \\
&& \mbox{with} \; \; c^{i+1}(t^n) = c^{i}({t^{n+1}})\; , \nonumber \\
&& \epsilon > | c^{i+1} (t^{n+1}) - c^{i-1}(t^{n+1})| \mbox{Stop criterion} \\
&& \mbox{result for the next time-step} \\
&&  c(t^{n+1}) = c^{m}(t^{n+1}) , \; \mbox{for} \;  m \; \mbox{fulfill the stop-criterion}
\end{eqnarray}
for each $j=0,2, \dots$, where $c^n$ is the known split
approximation at the previous time level.

Further $B_i = B_{1,i} + B_{2,i}$ is a decomposition of the matrix $B_i$.
\end{algorithm}

We reformulate to an algorithm that deals only with real numbers
and rewrite:
\begin{eqnarray}
&& \partial_t ( c_{re}(t) + i c_{im}(t) ) = ( B_{re,i 1} + i B_{im, i 1} ) (c_{re}(t) + i c_{im}(t) \\
&& + ( B_{re,i 2} + i B_{im, i 2} ) (c_{re}(t) + i c_{im}(t)), \quad 0 < t
\leq T  , \nonumber \\
&& (c_{re}(0) + i c_{im}(0))=(d_{i, re} + i d_{i, im}) (c_{re, 0} + i c_{im, 0})  \nonumber
i=1, \ldots, n ,
\end{eqnarray}

We have the following algorithm:
\begin{algorithm}

In the following, we have two iteration processes:
\begin{itemize}
\item First iteration process: $j = 1, \ldots, J$ iterates over the
decomposition of the matrices $B_1, B_2$.
\item Second iteration process: $k = 1, \ldots, K$ iterates over the
real and imaginary parts.
\end{itemize}

We start with $j=0, k=0$, 

First we iterate over $j$
\begin{eqnarray*}
 && \frac{\partial (c_{i,re}^{j,k})(t)}{\partial t} = B_{1,re} c_{i,re}^{j,k}(t) \; + \; B_{2,re} c_{i,re}^{j-1,k}(t) \; \nonumber \\
&& - \;  B_{1,im} c_{i,im}^{j-1,k-1}(t)\; - \; B_{2,im} c_{i,im}^{j-1,k-1}(t) , \;  \nonumber \\
&& \frac{\partial (c_{i,im}^{j,k})(t)}{\partial t} = B_{1,re} c_{i,im}^{j,k}(t) \; + \; B_{2,re} c_{i,im}^{j-1,k}(t) \; \nonumber \\
&& + \;  B_{1,im} c_{i,re}^{j-1,k-1}(t)\; + \; B_{2,im} c_{i,re}^{j-1,k-1}(t) , \;  \nonumber \\
 && \mbox{with the initial condition} \; \; c_{i,re}^{j,k}(t^n) = c_{i,re}^{j,k,n} ,  c_{i,im}^{j,k}(t^n) = c_{i,im}^{j,k,n}  \\
 && \mbox{with the starting condition} \; \;  c_{i,re}^{-1,k}(t^n) = 0 ,  c_{i,im}^{-1,k}(t^n) = 0 \\
\\
 && \frac{\partial (c_{i,re}^{j+1,k})(t)}{\partial t} = B_{1,re} c_{i,re}^{j,k}(t) \; + \; B_{2,re} c_{i,re}^{j+1,k}(t) \; \nonumber \\
&& - \;  B_{1,im} c_{i,im}^{j-1,k-1}(t)\; - \; B_{2,im} c_{i,im}^{j-1,k-1}(t) , \;  \nonumber \\
&& \frac{\partial (c_{i,im}^{j+1,k})(t)}{\partial t} = B_{1,re} c_{i,im}^{j,k}(t) \; + \; B_{2,re} c_{i,im}^{j+1,k}(t) \; \nonumber \\
&& + \;  B_{1,im} c_{i,re}^{j-1,k-1}(t)\; + \; B_{2,im} c_{i,re}^{j-1,k}(t) , \;  \nonumber \\
 && \mbox{with the initial condition} \; \; c_{i,re}^{j+1,k}(t^n) = c_{i,re}^{j+1,k,n} ,  c_{i,im}^{j+1,k}(t^n) = c_{i,im}^{j+1,k,n}  \\
\end{eqnarray*}

if $j = J$ or the iteration error over $j$ is less $err$
then we iterate over $k$.

Further $c^n$ is the known split
approximation at the  time level  $t=t^{n}$, cf.
\cite{fargei05}.

Further $B_i = B_{1,i} + B_{2,i}$ is a decomposition of the matrix $B_i$.
\end{algorithm}

\section{Experiments for the Plasma resonance spectroscopy}
\label{num}

In the following, we present different examples.

\subsection{First Example: Matrix problem with integral term}

We deal with a simpler integro-differential equations:

\begin{eqnarray}
\label{equation1}
  \frac{dc}{dt} - A c(t) dt = B \int c(t') \; dt', \; t \in [0,1],
\end{eqnarray}
and the transformed second order differential equation is given as:
\begin{eqnarray}
\label{eq20}
 \partial_{tt} c  && =  A  \partial_t c + B c \\
\end{eqnarray}
and the operators for the splitting scheme are given as:
\begin{eqnarray}
\label{eq20}
 \tilde{A} = -\frac{A}{2} , \; \tilde{B} = \sqrt{\frac{A \; A^T}{4} - B}
\end{eqnarray}
while $\tilde{A}^T$ is the transposed matrix of $\tilde{A}$.

The matrices are given as
\begin{equation}
\label{num_9}
A = \left(
\begin{array}{c c c c c c c c c c}
- 0.01 & 0.01 & 0 &  \ldots\\
  0.01 & - 0.01 & 0 & \ldots \\
  0.01 & 0.01 & -0.02 & 0 & \ldots  \\
  0.01 & 0.01 & 0.01 & - 0.03 &  0 & \ldots  \\
  \vdots \\
  0.01 & 0.01 & 0.01 & 0.01 & 0.01 & 0.01 & 0.01 & 0.01 &  - 0.08 & 0  \\
  0.01 & 0.01 & 0.01 & 0.01 &  0.01 & 0.01 & 0.01 & 0.01 &  0 & -0.08  \\
\end{array} \right) \; ,
\end{equation}
\begin{equation}
\label{num_9_1}
B = \left(
\begin{array}{c c c c c c c c c c}
  - 0.08 & 0 & 0.01 & 0.01 &  0.01 & 0.01 & 0.01 & 0.01 &  0.01 & 0.01  \\
   0 & -0.08 & 0.01 & 0.01 &  0.01 & 0.01 & 0.01 & 0.01 &  0.01 & 0.01  \\
 \vdots \\
  0 & 0 & 0 & 0 &  0 & 0 & 0 & - 0.02 &  0.01 & 0.01  \\
0 &  0 & 0 & 0 & 0 &  0 & 0 & 0 &  -0.01 &  0.01  \\
0 &  0 & 0 & 0 & 0 &  0 & 0 & 0 &  0.01 &  -0.01  \\
\end{array} \right) \; .
\end{equation}

The Figure \ref{first_1} present the numerical errors between the exact and the
numerical solution. Here we obtain results for one-side and two-side iterative schemes on operators $A$ and $B$. 
\begin{figure}[ht]
\begin{center}  
\includegraphics[width=9.0cm,angle=-0]{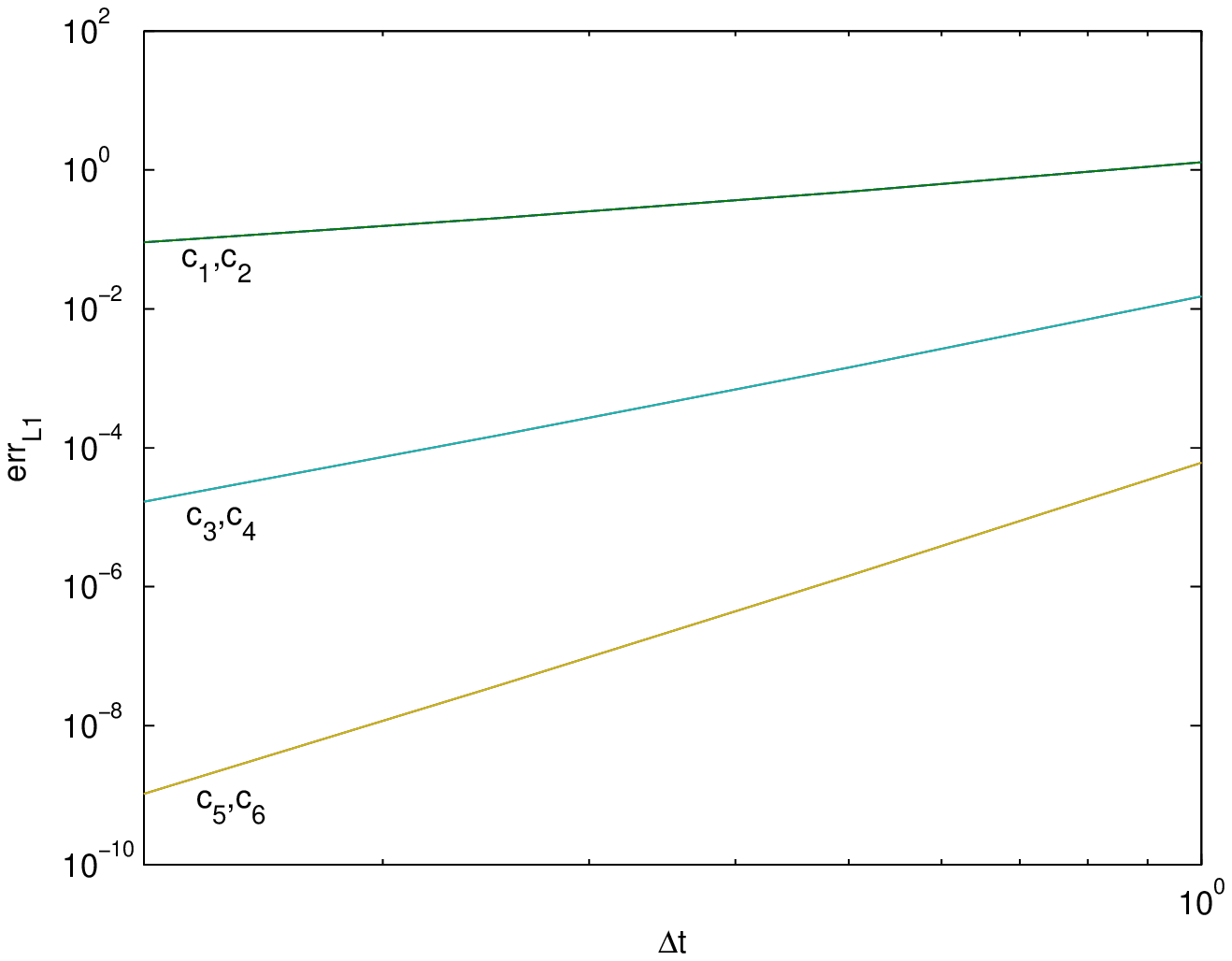} 
\includegraphics[width=9.0cm,angle=-0]{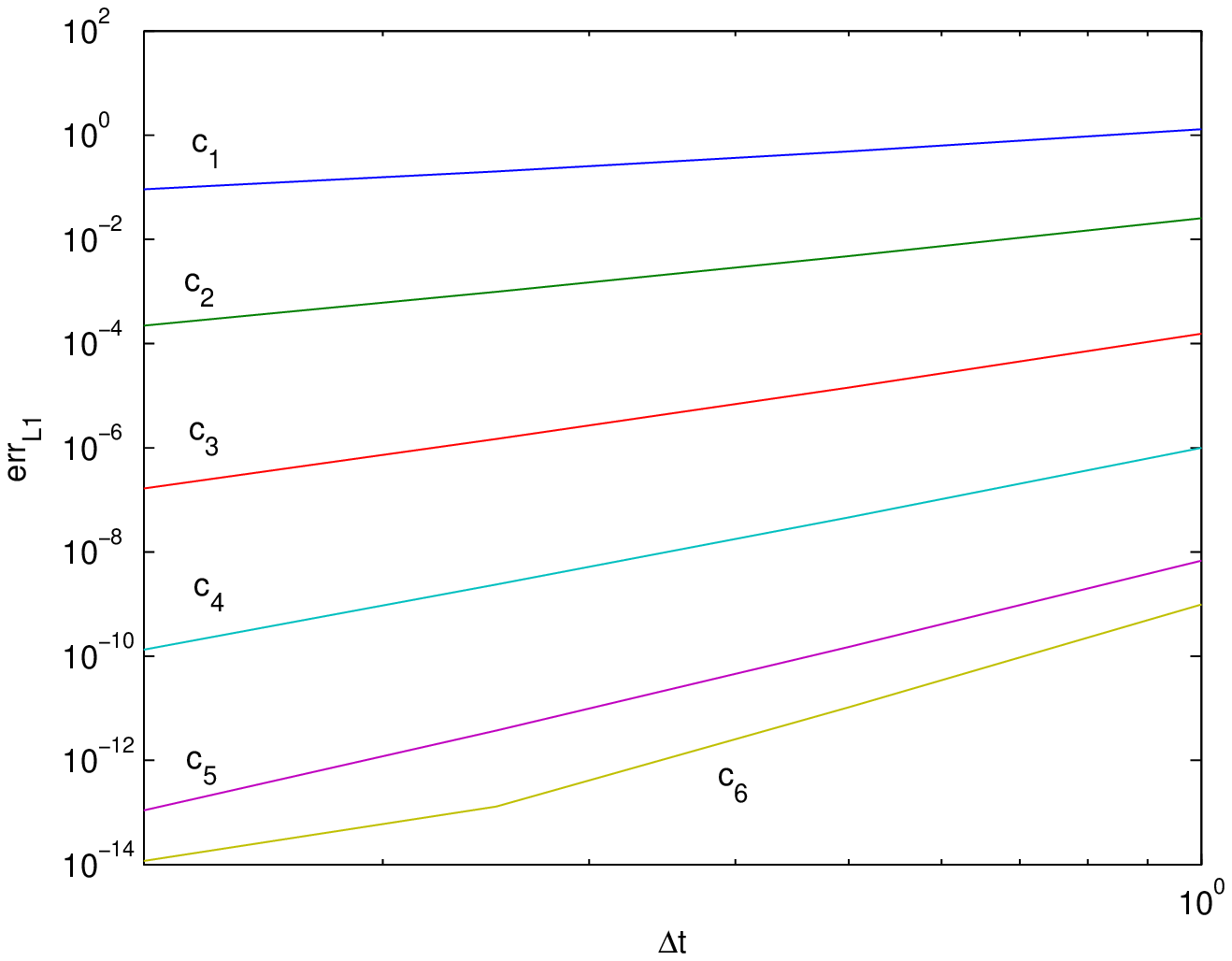} 
\includegraphics[width=9.0cm,angle=-0]{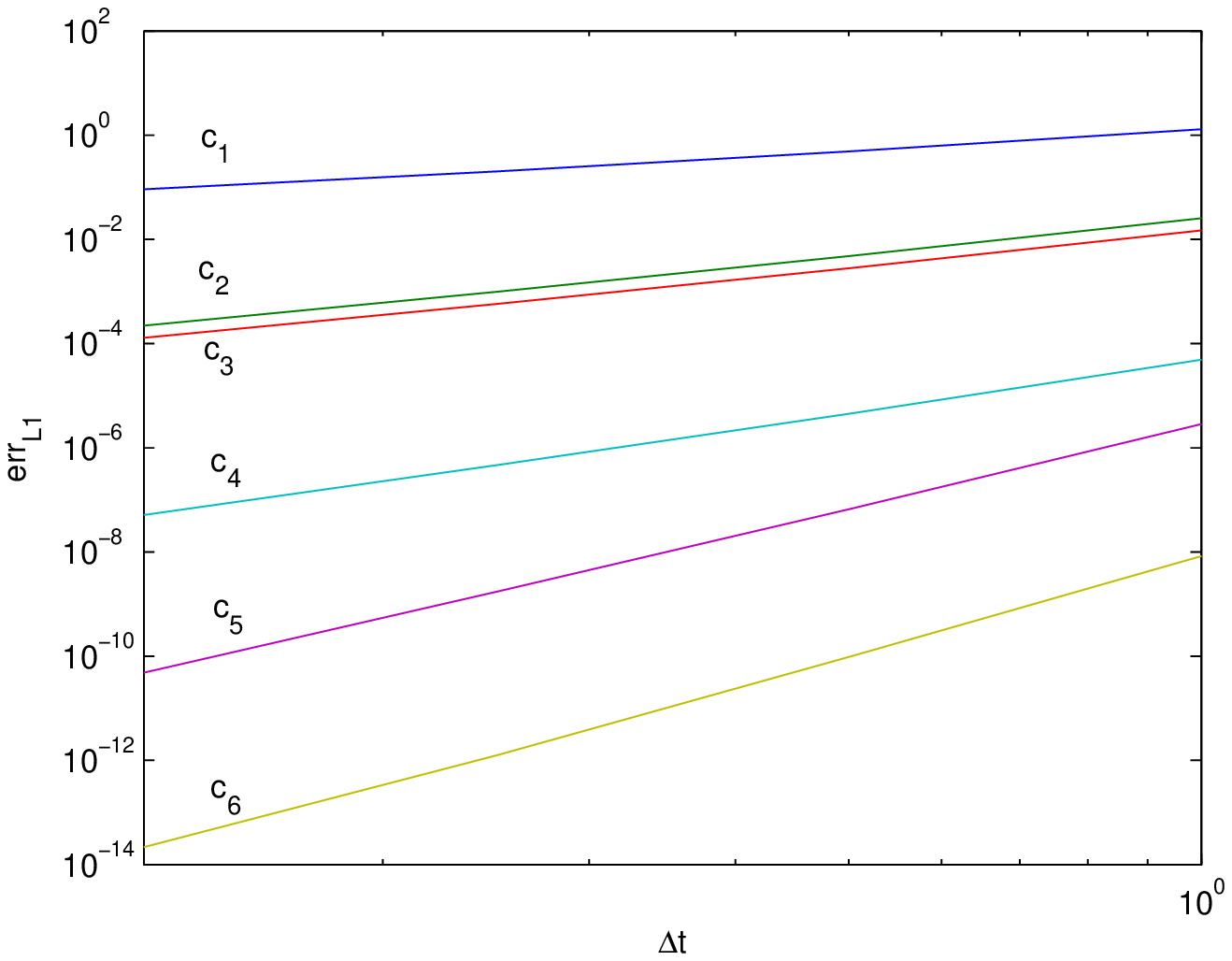} 
\end{center}
\caption{\label{first_1} Numerical errors of the one-side Splitting scheme with $A$ (upper figure),  the one-side Splitting scheme with $B$ (middle figure) and the two-side iterative schemes with $1, \ldots, 6$ iterative steps (lower figure).}
\end{figure}

The computational results are given in the Figure \ref{first_2}, we present the one-side and two-side iterative results.
\begin{figure}[ht]
\begin{center}  
\includegraphics[width=9.0cm,angle=-0]{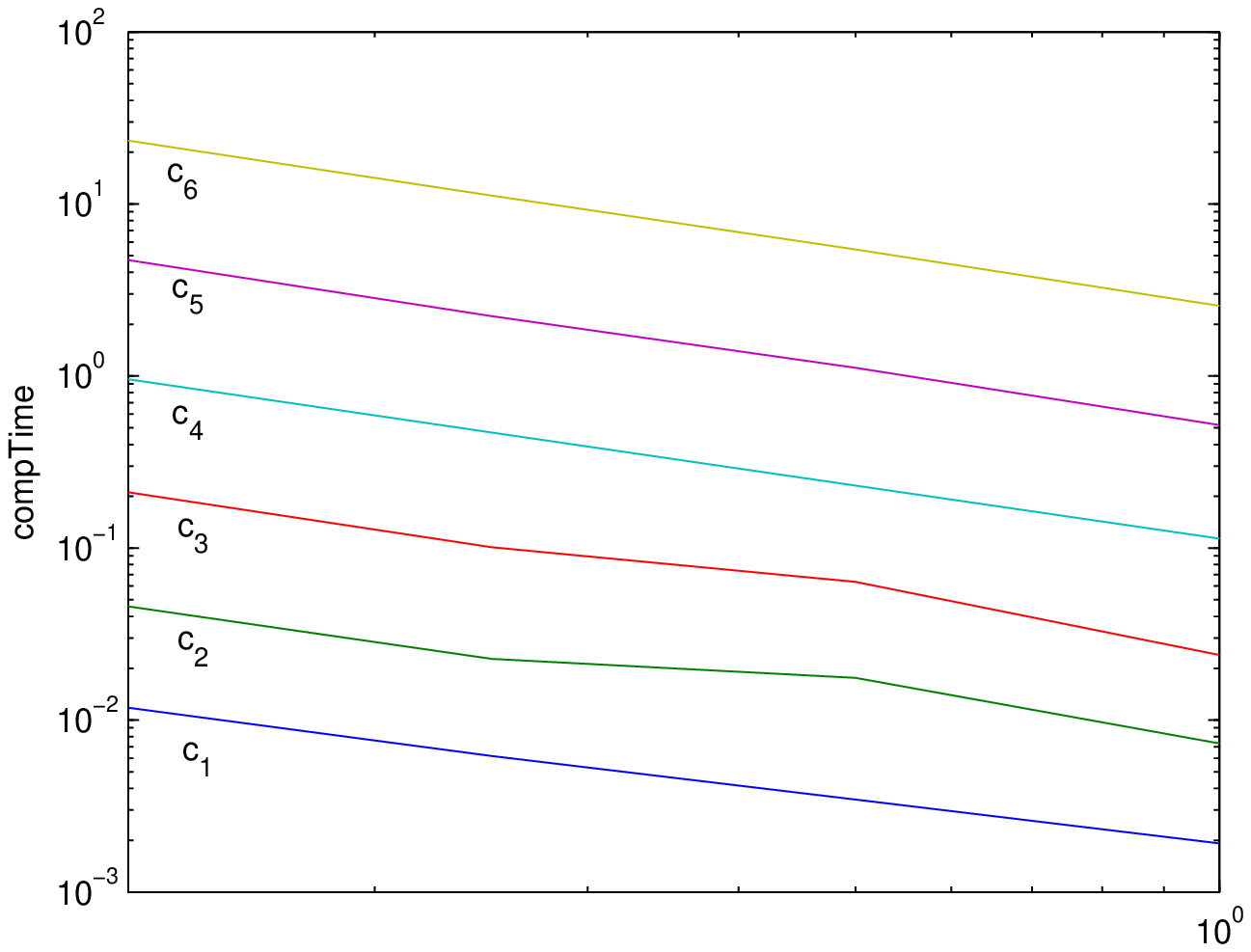} 
\includegraphics[width=9.0cm,angle=-0]{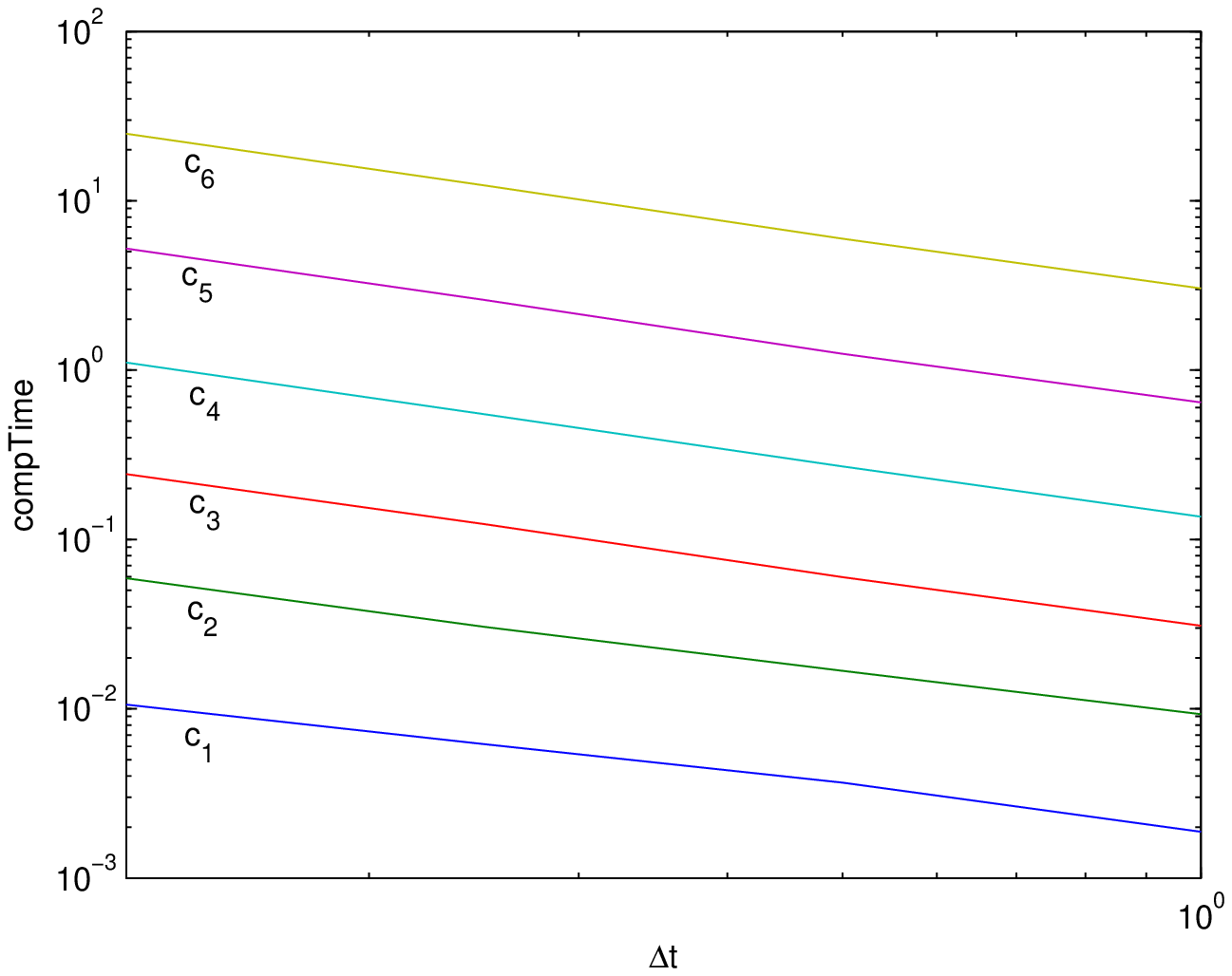} 
\includegraphics[width=9.0cm,angle=-0]{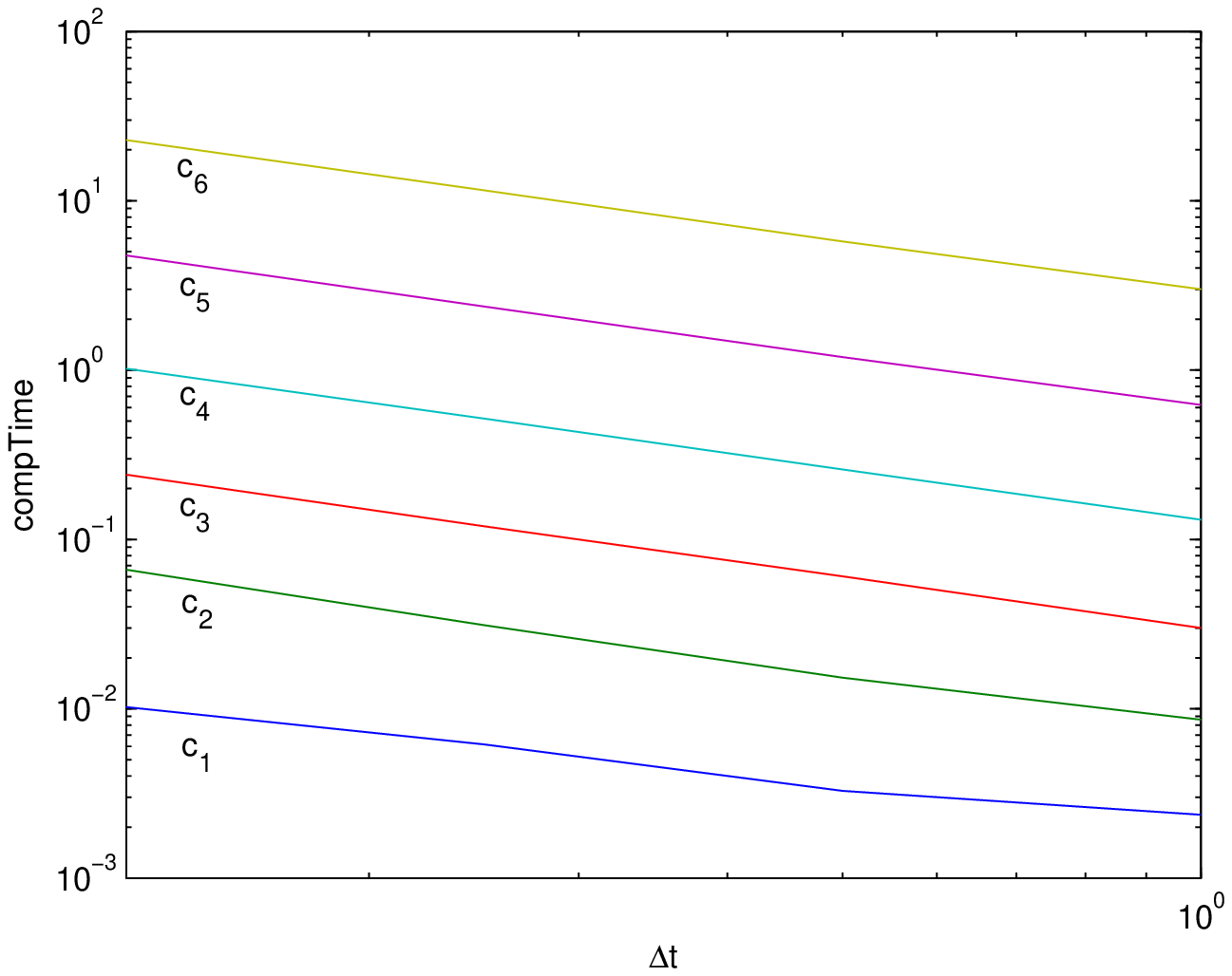} 
\end{center}
\caption{\label{first_2} The computational time of the one-side and two-side Splitting scheme: one-side splitting over $A$ (upper figure), one-side splitting over $B$ (middle figure) and  two-side splitting scheme alternating between $A$ and $B$ (lower figure) with $1, \ldots, 6$ iterative steps.}
\end{figure}

The Figure \ref{first_3} present the numerical errors between the exact and the
numerical solution for the optimized iterative schemes. Here we obtain results for one-side and two-side iterative schemes on operators $A$ and $B$. 
\begin{figure}[ht]
\begin{center}  
\includegraphics[width=9.0cm,angle=-0]{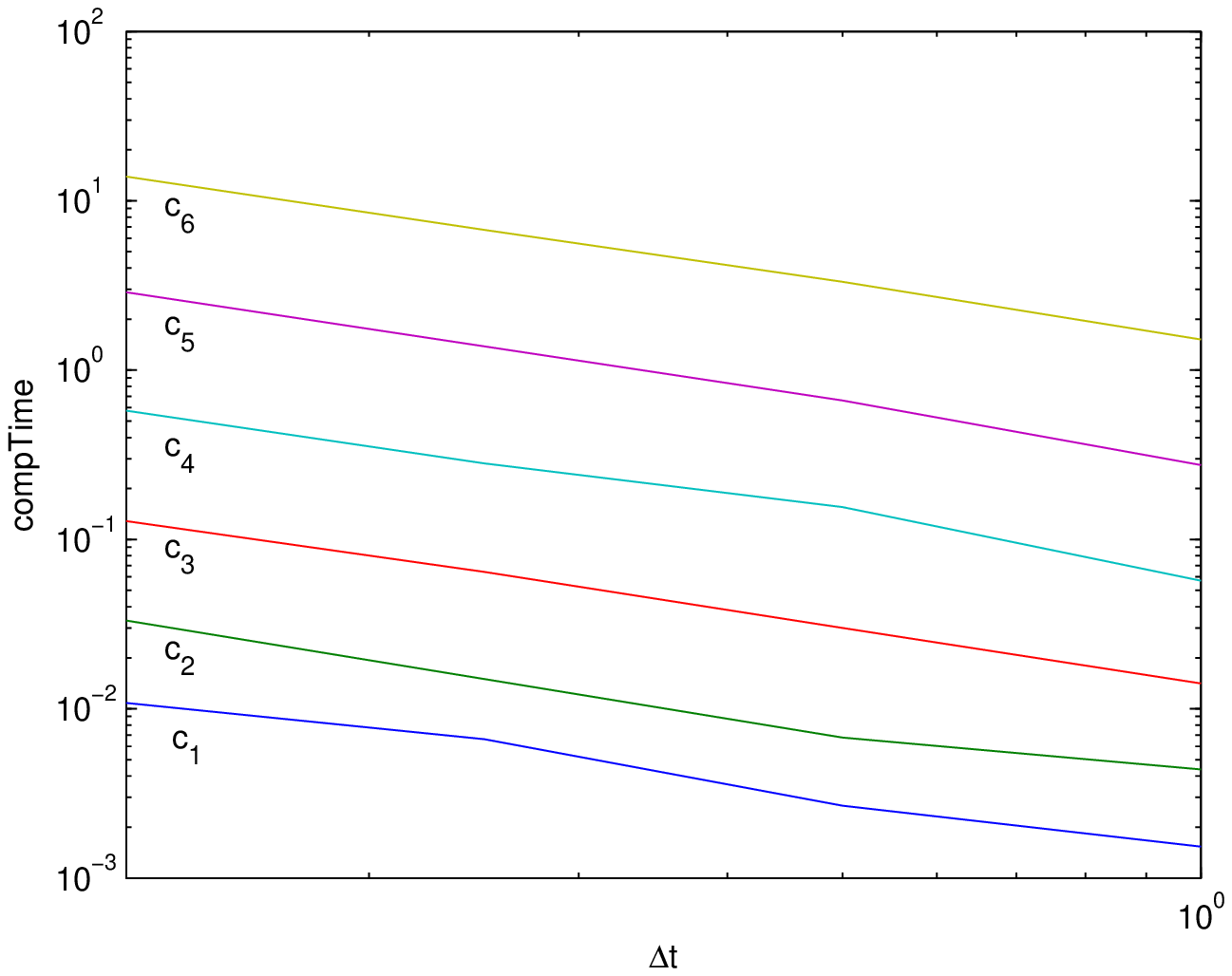} 
\includegraphics[width=9.0cm,angle=-0]{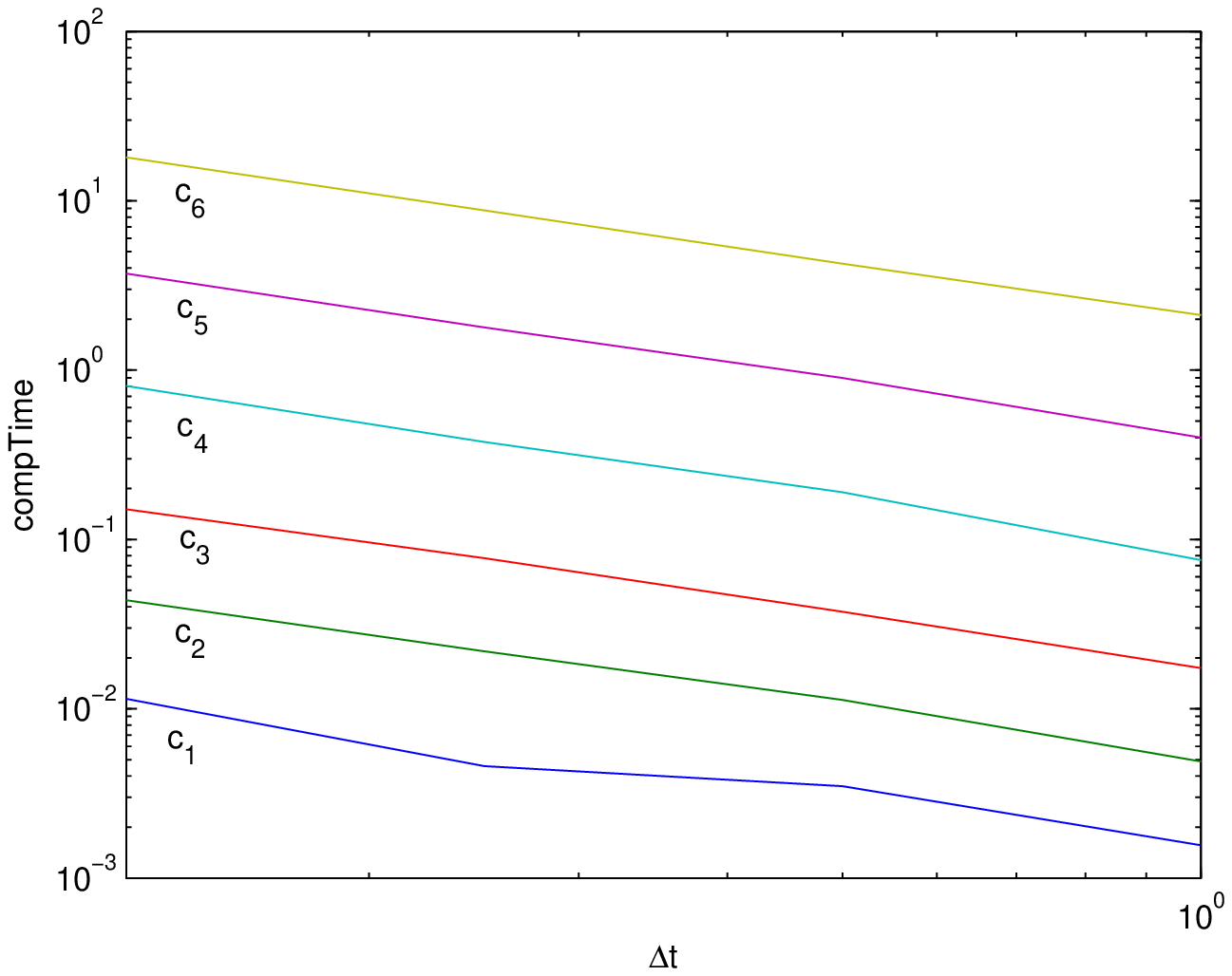} 
\includegraphics[width=9.0cm,angle=-0]{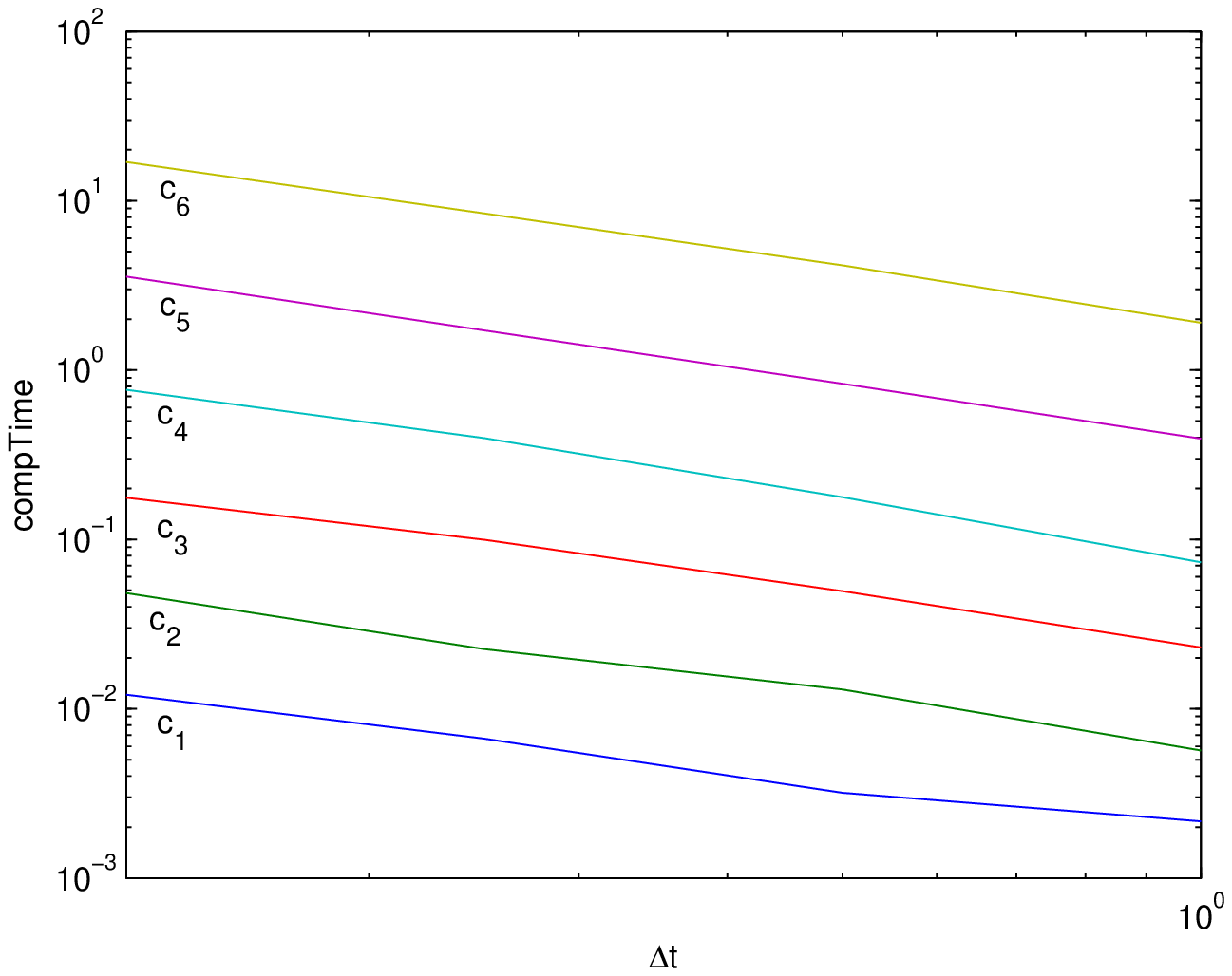} 
\end{center}
\caption{\label{first_3} Numerical errors of the one-side Splitting scheme with $A$ (upper figure),  the one-side Splitting scheme with $B$ (middle figure) and the two-side iterative schemes with $1, \ldots, 6$ iterative steps (lower figure).}
\end{figure}

\begin{remark}
For the computations, we see the benefit of the optimal iterative
schemes, which applied the two iterative steps of the two solutions
in one scheme, see Section \ref{splitt}.
The best results are given by the one-side iterative scheme with respect to
the operator $B$.
\end{remark}

\subsection{Second Example: Third order differential equations}

We deal with a simple third order differential equations:

\begin{eqnarray}
\label{equation1}
 && \frac{d^3c}{dt^3} - A c(t) = 0 , \; t \in [0,1], \\
&& c(0) = (1, \ldots, 1)^t \in \C^m, \\
&& c'(0) = \frac{1- \sqrt{2}}{3} A^{1/3} c(0),  \\
&& c''(0) = \frac{1}{3} A^{2/3} c(0)  ,
\end{eqnarray}
$A \in \C^m \times \C^m$, $c: \R^+ \rightarrow \C^m$ is sufficient smooth ($c \in C^3(\R^+)$) and we have $m = 10$.
 
The transformed first order differential equations are given as:
\begin{eqnarray}
\label{eq20}
 \partial_{t} c_1 -  A^{1/3} c_1  = 0 \\
 \partial_{t} c_2 -  A^{1/3} (- \frac{\sqrt{2}}{2} + i \frac{\sqrt{2}}{2} ) c_3  = 0 \\
 \partial_{t} c_3 -  A^{1/3} (- \frac{\sqrt{2}}{2} - i \frac{\sqrt{2}}{2} ) c_3  = 0 \\
\end{eqnarray}
where $c = \sum_{i=1}^3 d_i c_i(t)$ and $d_1, \ldots, d_3$ are given with respect to the initial conditions and are given as $d_1 = d_2 = d_3 = \frac{1}{3} c(0)$.

Further the operators for the splitting scheme for the three iterative splitting schemes are given as:
\begin{eqnarray}
\label{eq20}
 A_{1,1,re} = diag(A^{1/3}),  A_{1,2,re} outerdiag(A^{1/3}) , \\
 A_{2,1,re} = - \frac{\sqrt{2}}{2} diag(A^{1/3}),  A_{2,2,re} = - \frac{\sqrt{2}}{2} outerdiag(A^{1/3}) , \\
 A_{2,1,im} = \frac{\sqrt{2}}{2} diag(A^{1/3}),  A_{2,2,im} = \frac{\sqrt{2}}{2} outerdiag(A^{1/3}) , \\
 A_{3,1,re} = - \frac{\sqrt{2}}{2} diag(A^{1/3}),  A_{3,2,re} = - \frac{\sqrt{2}}{2} outerdiag(A^{1/3}) , \\
 A_{3,1,im} = - \frac{\sqrt{2}}{2} diag(A^{1/3}),  A_{3,2,im} = - \frac{\sqrt{2}}{2} outerdiag(A^{1/3}) , 
\end{eqnarray}

The matrix $A$ is given as
\begin{equation}
\label{num_9}
A = \left(
\begin{array}{c c c c c c c c c c}
- 0.01 & 0.01 & 0 &  \ldots\\
  0.01 & - 0.01 & 0 & \ldots \\
  0.01 & 0.01 & -0.02 & 0 & \ldots  \\
  0.01 & 0.01 & 0.01 & - 0.03 &  0 & \ldots  \\
  \vdots \\
  0.01 & 0.01 & 0.01 & 0.01 & 0.01 & 0.01 & 0.01 & 0.01 &  - 0.08 & 0  \\
  0.01 & 0.01 & 0.01 & 0.01 &  0.01 & 0.01 & 0.01 & 0.01 &  0 & -0.08  \\
\end{array} \right) \; ,
\end{equation}

Here, we deal with the following splitting schemes:

\begin{itemize}
\item $c_1$ is computed by a scalar iterative scheme.
\item $c_2, c_3$ are computed by a vectorial iterative scheme (because of
real and imaginary parts).
\end{itemize}

For $c_1$ we have:
\begin{eqnarray}
\label{gleich_oper_2} && \frac{\partial {\bf C}_1^i(t)}{\partial t} = {\cal A}_{11} {\bf C}_1^i(t) \; + \;  {\cal A}_{12} {\bf C}_1^{i-1}(t), \;
\mbox{with} \; \; {\bf C}_1^i(t^n) = {\bf C}_1^{i-1}({t^{n+1}}) \\
&& \mbox{and the starting values} \; {\bf C}_1^{0}(t^n) =  \frac{1}{3}{\bf C}(t^n) \;
\end{eqnarray}
where $ {\bf C}_1 = (c_{1,re} + i c_{1,im})^t$ and
\begin{eqnarray}
{\cal A}_{11} = \left(
\begin{array}{c c}
 A_{1,1,re}  & 0 \\
0 &  A_{1,1,re} 
\end{array} \right) ,
{\cal A}_{12} = \left(
\begin{array}{c c}
 A_{1,2,re}  & 0  \\
 0 &  A_{1,2,re} 
\end{array} \right) ,
\end{eqnarray}
for $i=1,2, \ldots, I$ and the solution is given as ${\bf C}_1^i(t^{n+1})$.

For $c_2$ we have:
\begin{eqnarray}
\label{gleich_oper_2} && \frac{\partial {\bf C}_2^i(t)}{\partial t} = {\cal A}_{21} {\bf C}_2^i(t) \; + \;  {\cal A}_{22} {\bf C}_2^{i-1}(t), \;
\mbox{with} \; \; {\bf C}_2^i(t^n) = {\bf C}_2^{i-1}({t^{n+1}}) \\
&& \mbox{and the starting values} \; {\bf C}_2^{0}(t^n) =  \frac{1}{3}{\bf C}(t^n) \;
\end{eqnarray}
where $ {\bf C}_2 = (c_{2,re} + i c_{2,im})^t$ and
\begin{eqnarray}
{\cal A}_{21} = \left(
\begin{array}{c c}
 A_{2,1,re}  & 0 \\
0 &  A_{2,1,re} 
\end{array} \right) ,
{\cal A}_{22} = \left(
\begin{array}{c c}
 A_{2,2,re}  & - ( A_{2,1,im} +  A_{2,2,im} )  \\
 ( A_{2,1,im} +  A_{2,2,im} ) &  A_{2,2,re} 
\end{array} \right) ,
\end{eqnarray}
for $i=1,2, \ldots, I$ and the solution is given as ${\bf C}_2^i(t^{n+1})$.

For $c_3$ we have:
\begin{eqnarray}
\label{gleich_oper_2} && \frac{\partial {\bf C}_3^i(t)}{\partial t} = {\cal A}_{31} {\bf C}_3^i(t) \; + \;  {\cal A}_{32} {\bf C}_3^{i-1}(t), \;
\mbox{with} \; \; {\bf C}_3^i(t^n) = {\bf C}_3^{i-1}({t^{n+1}}) \\
&& \mbox{and the starting values} \; {\bf C}_3^{0}(t^n) =  \frac{1}{3}{\bf C}(t^n) \;
\end{eqnarray}
where $ {\bf C}_3 = (c_{3,re} + i c_{3,im})^t$ and
\begin{eqnarray}
{\cal A}_{31} = \left(
\begin{array}{c c}
 A_{3,1,re}  & 0 \\
0 &  A_{3,1,re} 
\end{array} \right) ,
{\cal A}_{32} = \left(
\begin{array}{c c}
 A_{3,2,re}  & - ( A_{3,1,im} +  A_{3,2,im} )  \\
 ( A_{3,1,im} +  A_{3,2,im} ) &  A_{3,2,re} 
\end{array} \right) ,
\end{eqnarray}
for $i=1,2, \ldots, I$ and the solution is given as ${\bf C}_3^i(t^{n+1})$.

The solution is given as:

${\bf C}^i(t^{n+1}) = \sum_{j=1}^3 {\bf C}_j^i(t^{n+1})$.

The computational results for the optimized iterative schemes are given in the Figure \ref{first_4}, we present the one-side and two-side iterative results.
\begin{figure}[ht]
\begin{center}  
\includegraphics[width=9.0cm,angle=-0]{1_opt_oneSideA_c1-c6_compTime.eps} 
\includegraphics[width=9.0cm,angle=-0]{1_opt_oneSideB_c1-c6_compTime.eps} 
\includegraphics[width=9.0cm,angle=-0]{1_opt_twoSide_c1-c6_compTime.eps} 
\end{center}
\caption{\label{first_4} The computational time of the one-side and two-side Splitting scheme: one-side splitting over $A$ (upper figure), one-side splitting over $B$ (middle figure) and  two-side splitting scheme alternating between $A$ and $B$ (lower figure) with $1, \ldots, 6$ iterative steps.}
\end{figure}

\begin{remark}
For the computations, we see the benefit of the optimal iterative
schemes. While we deal with real and imaginary parts, it is 
important to reduce the computational costs. 
We applied in one scheme the real and imaginary solution, see Section \ref{splitt}.
The best results are given by the one-side iterative scheme with respect to
the operator $B$.
\end{remark}

\section{Conclusions and Discussions }
\label{concl}

We present the coupled model for a transport model
for deposition species in a plasma environment.
We assume the flow field is computed by the plasma 
model and the transport of the deposition species 
with a transport-reaction model.

Based on the physical effects, we deal with higher
order differential equations (scattering parts, reaction parts, etc.).
We validate a novel splitting schemes, that embedded the real and 
imaginary parts of the solutions.
Standard iterative splitting schemes can be extended to such 
complex iterative splitting schemes.
First computations help to understand the 
important modeling of the plasma environment in 
a CVD reactor with scattering and higher order time-derivative parts.
In future, we work on a general theory of embedding the 
complex schemes to standard splitting schemes.

\bibliographystyle{plain}

\end{document}